\def\a{\alpha}
\def\s{\sigma}
\newtheorem{theorem}{Theorem}[section]
\newtheorem{lemma}[theorem]{Lemma}
\newtheorem{remark}[theorem]{Remark}
\newtheorem*{theorem*}{Theorem}
\newtheorem*{corollary*}{Corollary}
\theoremstyle{definition}
\numberwithin{equation}{section}
\numberwithin{figure}{section}
\renewcommand{\le}{\leqslant}
\renewcommand{\ge}{\geqslant}
\def\R{\mathbb R}
\def\H{\mathcal H}
\def\Om{\Omega}
\def\pa{\partial}
\def\ov{\overline}
\title[Contact angle in a nonlocal capillarity problem]{Asymptotic expansions of the contact angle \\ in
nonlocal capillarity problems}
\author{Serena Dipierro}
\address[Serena Dipierro]{School of Mathematics and Statistics,
University of Melbourne,
813 Swanston Street, Parkville VIC 3010, Australia}
\email{s.dipierro@unimelb.edu.au}
\author{Francesco Maggi}
\address[Francesco Maggi]{Abdus
Salam International Center for Theoretical Physics, Strada Costiera
11, I-34151, Trieste, Italy. On leave from the
University of Texas at Austin}
\email{fmaggi@ictp.it}
\author{Enrico Valdinoci}
\address[Enrico Valdinoci]{School of Mathematics and Statistics,
University of Melbourne,
813 Swanston Street, Parkville VIC 3010, Australia, and
Weierstra{\ss} Institut f\"ur Angewandte Analysis und Stochastik,
Mohrenstra{\ss}e 39, 10117 Berlin, Germany,
and Dipartimento di Matematica, Universit\`a degli studi di Milano,
Via Saldini 50, 20133 Milan, Italy}
\email{enrico@mat.uniroma3.it}
\begin{document}

\begin{abstract}
{\rm We consider a family of nonlocal capillarity models, where surface tension
is modeled by exploiting the family of fractional interaction kernels $|z|^{-n-s}$,
with $s\in(0,1)$ and $n$ the dimension of the ambient space. The
fractional Young's law (contact angle condition) predicted by these models coincides, in the limit as $s\to 1^-$,
with the classical Young's law determined by the Gauss free energy. Here we refine this asymptotics by showing that,
for $s$ close to $1$, the fractional contact angle is always {\it smaller} than its classical counterpart when the relative adhesion
coefficient $\s$ is negative, and {\it larger} if $\s$ is positive. In addition, we address the asymptotics of the fractional Young's law
in the limit case $s\to 0^+$ of interaction kernels with heavy tails. Interestingly, near $s=0$, the dependence of
the contact angle from the relative adhesion coefficient becomes linear.}
\end{abstract}

\keywords{Nonlocal surface tension, contact angle, asymptotics.}
\subjclass[2010]{76B45, 76D45, 45M05.}

\maketitle

\section{Introduction}

In this paper we consider the family of nonlocal capillarity problems recently introduced in \cite{MaggiValdinoci},
and provide a detailed description of the asymptotic behavior of the fractional Young's law
in the two limit cases defined by this family of problems.

Let us recall that the basic model for capillarity phenomena is based on the study of the Gauss free energy \cite{Finn}
\begin{equation}
  \label{gauss}\H^{n-1}(\Om\cap\pa E)+\s\,\H^{n-1}(\pa\Om\cap\pa E)+g\,\rho\,\int_E\,x_n\,dx
\end{equation}
associated to the region $E$ occupied by a liquid droplet confined in a container $\Omega\subset\R^n$, $n\ge 2$.
In this way, $\H^{n-1}(\Om\cap\pa E)$ is the surface tension energy of the liquid interface interior to
the container, $\s\,\H^{n-1}(\pa\Om\cap\pa E)$ is the surface tension energy of the liquid interface at the boundary walls
of the container, and $g\,\rho\,\int_E\,x_n\,dx$ is the potential energy due to gravity. The mismatch between the
surface tensions of the liquid/air and liquid/solid interfaces is taken into account by the relative adhesion coefficient
$\s\in(-1,1)$. There are also situations where one wishes to consider more general potential energies, and thus the potential
energy density $g\,\rho\,x_n$ is replaced by some generic density $g(x)$.

The family of nonlocal capillarity models introduced in \cite{MaggiValdinoci} replaces the use of surface area
to define the total surface tension energy of the droplet $E$, with the fractional interaction energy
\[
I_s(E,E^c\cap\Om)+\s\,I_s(E,\Om^c)\,.
\]
Here $E^c$ stands for $\R^n\setminus E$ and given two disjoint subsets $A$ and $B$ of $\R^n$ we set
\[
I_s(A,B)=\int_A\,dx\int_B\frac{dy}{|x-y|^{n+s}}\,,\qquad s\in(0,1)\,.
\]
This kind of fractional interaction kernel has been used since a long time. Of particular interest for us is the
result of \cite{Bourgain01anotherlook,davilaBV} (see also \cite{caffarellivaldinoci,
ambrosiodephilippismartinazzi}) showing that, as $s\to1^-$ and for a suitable dimensional constant
$c(n)$, $(1-s)\,I_s(E,E^c)\to c(n)\,\H^{n-1}(\pa E)$ whenever $E$ is an open set with
Lipschitz boundary (and more generally, for every set of finite perimeter if $\H^{n-1}(\pa E)$ is replaced by
the distributional perimeter of $E$). Starting from this result one can show (see \cite[Proposition 1.2]{MaggiValdinoci})
that, similarly, as $s\to 1^-$,
\begin{equation}\label{FG:A1}
I_s(E,E^c\cap\Om)+\s\,I_s(E,\Om^c)\to c(n)\,\Big(\H^{n-1}(\Om\cap\pa E)+\s\,\H^{n-1}(\pa\Om\cap\pa E)\Big)
\end{equation}
whenever $E$ is a Lipschitz subset of $\Om$.

In general, the nonlocal interaction $I_s$ plays a role of
a fractional interpolation between classical perimeter and
Lebesgue measure, and so, in a sense, it bridges ``classical surface tensions''
to ``bulk energies of volume type''.
More precisely, as $s\to0^+$, one has that $s\,I_s(E,E^c)$
converges to
the Lebesgue measure of $E$ (up to normalization constants), and the
fractional
perimeter of a set in a domain, as introduced in \cite{caffaroquesavin},
approaches a weighted convex combination between the
Lebesgue measure of the set in the domain
and the Lebesgue measure of the complement of the set in the domain,
where the convex interpolation parameter takes into account
the behavior of the set at infinity (see \cite{MR1940355, MR3007726}
and Appendix A in \cite{2016arXiv160706872D}).
In this sense, the counterpart of~\eqref{FG:A1} as $s\to0^+$,
for smooth and bounded sets~$E\subseteq\Omega$ reads
\begin{equation}\label{FG:A0}
I_s(E,E^c\cap\Om)+\s\,I_s(E,\Om^c)\to
\bar{c}(n)\,\sigma\,|E|,
\end{equation}
for a suitable $\bar{c}(n)>0$. A proof of this will be given in
Appendix~\ref{876}.\medskip

As a matter of fact, we stress that
the case of $s\to0^+$ is always somewhat
delicate, since the regularity theory may degenerate (see \cite{MR3107529,
daviladelpinowei}), the oscillations of the set at infinity may prevent the
existence of limit behaviors (see
Examples 1 and 2 in \cite{MR3007726}), the nonlocal mean curvature
of bounded domains converges to an absolute constant independent of
the geometry involved (see Appendix B in \cite{2016arXiv160706872D})
and minimal sets completely stick to the boundary (see \cite{2015arXiv150604282D}).
\medskip

{F}rom the point of view of applications, nonlocal interactions and
fractional perimeters have also very good potentialities
in the theory of image reconstruction, since the
the numerical errors produced by the approximation of
nonlocal interactions are typically considerably smaller than the ones
related to the classical perimeter (see e.g. the discussion next to
Figures 1 and 2 in \cite{2016arXiv160706872D}).
\medskip

With these motivations in mind, in \cite{MaggiValdinoci} we considered the study
of the family of free energies
\begin{equation}
  \label{fractional gauss}
  I_s(E,E^c\cap\Om)+\s\,I_s(E,\Om^c)+\int_E\,g(x)\,dx
\end{equation}
parameterized by $s\in(0,1)$, with particular emphasis on the limit case $s\to 1^-$. In fact, the full range of values
$s\in(0,1)$ has a clear geometric interest when $\s=0$ and $g\equiv 0$. The reason is that the volume-constrained minimization
of $I_s(E,E^c\cap\Om)$ defines a fractional relative isoperimetric problem which fits naturally in the emerging
theory of fractional geometric variational problems, initiated by the seminal paper \cite{caffaroquesavin} on fractional
perimeter minimizing boundaries.

We mention that models related to the functional in~\eqref{fractional gauss}
have been
numerically analized working with Gaussian interaction kernels, see~\cite{2016arXiv160204688X}
and references therein.
\medskip

We now come the main point discussed in this paper, which is the precise behavior
of the Euler-Lagrange equation
of the fractional Gauss free energies \eqref{fractional gauss} in the limit cases $s\to 1^-$ and $s\to 0^+$. Let us recall
the important notion of {\it fractional mean curvature} of an open set $E$ with Lipschitz boundary
\[
H^s_E(x)={\rm p.v.}\int_{\R^n}\frac{(1_{E^c}-1_E)(y)}{|y-x|^{n+s}}\,dy\,,\qquad x\in\pa E\,,
\]
which was introduced and studied from a geometric viewpoint in \cite{caffaroquesavin}.
If $g\in C^1(\R^n)$ and
$E$ is a volume-constrained critical point of the fractional Gauss free energy \eqref{fractional gauss}
such that $\Om\cap\pa E$ is of class $C^{1,\a}$ for some $\a\in(s,1)$, then it was proved in \cite[Theorem 1.3]{MaggiValdinoci}
that along $\pa E$  the following Euler-Lagrange equation
\begin{equation}
  \label{el fractional}
  H^s_E(x)+g(x)=\lambda+(1-\s)\,\int_\Om^c\frac{dy}{|x-y|^{n+s}}\qquad\forall x\in\Om\cap\pa E\,,
\end{equation}
holds, where $\lambda\in\R$ is a constant Lagrange multiplier.
Let us recall that in the classical case, the Euler-Lagrange equation for the volume-constrained critical
points of the Gauss free energy takes the form
\begin{eqnarray}\label{el classic 1}
  H_E(x)+g(x)&=&\lambda\qquad\forall x\in\Om\cap\pa E\,,
  \\\label{el classic 2}
  \nu_E(x)\cdot\nu_\Om(x)&=&\s\qquad\forall x\in\pa\Om\cap\overline{\Om\cap\pa E}\,,
\end{eqnarray}
where $H_E(x)$ is the mean curvature of $\pa E$ with respect to the outer unit normal $\nu_E$ to $E$ and, again,
$\lambda$ is a Lagrange multiplier. Equation \eqref{el classic 2} is the classical Young's law, which relates the contact
angle between the interior interface and the boundary walls of the container with the relative adhesion coefficient.

An interesting qualitative feature of the fractional model is that the two well-known equilibrium equations \eqref{el classic 1}
and \eqref{el classic 2} are now merged into the same equation \eqref{el fractional}. In the fractional equation the
effect of the relative adhesion coefficient is present not only on the boundary of the wetted region, but also at
the interior interface points, because of the term
\[
(1-\s)\,\int_\Om^c\frac{dy}{|x-y|^{n+s}}\,\qquad x\in\Om\,.
\]
Notice that this term is increasingly localized near $\pa\Om$ the closer $s$ is to $1$. Moreover,
in \cite[Theorem 1.4]{MaggiValdinoci}, we have shown that \eqref{el fractional} implicitly enforces
a contact angle condition, in the sense that, if $\ov{\Om\cap\pa E}$ is a $C^{1,\a}$-hypersurface
with boundary having all of its boundary points contained in $\pa\Om$, then
\begin{equation}
  \label{nonlocal youngs law ts}
  \nu_E(x)\cdot\nu_\Om(x)=\cos(\pi-\theta(s,\s))\qquad\forall x\in\pa\Om\cap\overline{\Om\cap\pa E}\,.
\end{equation}
Here $\theta\in C^\infty((0,1)\times(-1,1);(0,\pi))$ is implicitly defined by the equation
\begin{equation*}
1+\sigma =(\sin\theta)^s\,\frac{M(\theta,s)}{M\left(\frac\pi2,s\right)}\,,
\end{equation*}
where
\[
M(\theta,s)=2\int_0^\a\left[
\int_0^{+\infty}
\frac{r\,dr}{
{\left( {r}^{2}+2\,r\,\cos t+1\right) }^{\frac{2+s}{2}}}
\right]\,dt\,.
\]
Thus, as for the classical case,
the contact angle does not depend on potential energy density~$g$. Also, as~$s\to1^-$, the fractional contact angle
converges to the one predicted by the classical Young's Law \eqref{el classic 2}, namely, see again \cite[Theorem 1.4]{MaggiValdinoci},
\begin{equation}\label{YY:s1}
\lim_{s\to1^-} \theta(s,\sigma)=\arccos(-\sigma)\,.
\end{equation}
The goal of this paper is to provide precise asymptotics
for $\theta(s,\s)$ both as~$s\to1^-$ and as~$s\to0^+$.
As $s\to0^+$, the relation between $\theta$ and $\sigma$ changes dramatically, and the trigonometric identity \eqref{YY:s1}
is replaced by the {\it linear} relation
\begin{equation}\label{YY:s0}
\lim_{s\to0^+} \theta(s,\sigma)=\frac{\pi}{2}(1+\sigma)\,.
\end{equation}
Formulas~\eqref{YY:s1} and~\eqref{YY:s0}
are indeed part of a more general result, which goes as follows:

\begin{theorem}\label{ASY}
If~$\theta(s,\sigma)$ is the angle prescribed by the fractional Young's law~\eqref{nonlocal youngs law ts},
then $\theta(s,\cdot)$ is strictly increasing on $(-1,1)$ for every $s\in(0,1)$, and for every $\s\in(-1,1)$ we have
\begin{equation}\label{LAPRIMA}
\begin{split}
\theta(s,\sigma)\,=\,&
\arccos(-\sigma)\\ &-
\frac{2\sigma\log 2+(1-\sigma)\log(1-\sigma)-
(1+\sigma)\log(1+\sigma)
}{2\sqrt{1-\sigma^2}}\,(1-s)\\ &+o(1-s)
\,.\end{split}
\end{equation}
in the limit $s\to 1^-$, and
\begin{equation}\label{LASECONDA}\begin{split}
\theta(s,\sigma)\,=\,&\frac{\pi}{2}(1+\sigma)
\\ &-\left[\frac\pi2\,(1+\sigma)\,\log\left(
\cos\frac{\pi\sigma}{2}\right)-
\Xi\left(\frac{\pi}{2}\,(1+\sigma)\right)
+(1+\sigma)\,\Xi\left(\frac{\pi}{2}\right)\right]\, s\\ &+o(s)
\,,\end{split}
\end{equation}
in the limit $s\to 0^+$. Here we set
\begin{equation}\label{XIDE} \Xi(\a):=
\int_0^\a
\frac{t}{\tan t}
\,dt\,,\qquad\a\in[0,\pi]\,.
\end{equation}
\end{theorem}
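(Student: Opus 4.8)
The plan is to extract all the needed information from the defining relation
\[
1+\sigma = (\sin\theta)^s\,\frac{M(\theta,s)}{M(\pi/2,s)}
\]
by treating it as an implicit equation $F(\theta,s,\sigma)=0$ and performing a first-order Taylor expansion in $s$ near $s=1$ and near $s=0$. The monotonicity of $\theta(s,\cdot)$ should come almost for free: the right-hand side $(\sin\theta)^s M(\theta,s)/M(\pi/2,s)$ is, for fixed $s$, a strictly monotone function of $\theta$ on $(0,\pi)$ — indeed $\sin\theta$ is not monotone there, but one checks directly that $r^2+2r\cos t+1$ is decreasing in $\cos t$, hence the inner integral and thus $M(\theta,s)$ is increasing in $\theta$ on $(0,\pi)$, and a short computation (or the implicit function theorem applied to the already-known smoothness $\theta\in C^\infty$) shows $\partial_\sigma\theta>0$. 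So the bulk of the work is the two asymptotic expansions.

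For the \emph{$s\to 1^-$ expansion}, the key is a precise two-term expansion of $M(\theta,s)$ in powers of $(1-s)$. The inner $r$-integral $\int_0^\infty r(r^2+2r\cos t+1)^{-(2+s)/2}\,dr$ can be evaluated in closed form — it is an elementary integral (substitute $u=r^2+2r\cos t+1$ after completing the square, or recognize it as a Beta-type integral) yielding something like $\frac{1}{s}\bigl[(\sin t)^{-s}\cdot(\text{explicit trigonometric factor in }t)\bigr]$ plus lower-order corrections; then $M(\theta,s)=\frac{2}{s}\int_0^\theta(\ldots)\,dt$. Expanding the resulting $t$-integral and the prefactor $(\sin\theta)^s$ to first order in $(1-s)$, plugging into $F=0$, and solving for $\theta$ via the already-known leading term $\theta_0=\arccos(-\sigma)$ gives \eqref{LAPRIMA}; the logarithmic terms $\log 2$, $\log(1\pm\sigma)$ arise from differentiating $(\sin\theta)^s$ and from the $\int \log(\sin t)\,dt$-type integrals produced by differentiating the kernel exponent in $s$.

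For the \emph{$s\to 0^+$ expansion}, the structure is different because $M(\theta,s)$ \emph{diverges} as $s\to 0^+$ (the inner integral $\int_0^\infty r(r^2+2r\cos t+1)^{-1-s/2}\,dr$ is logarithmically divergent at $s=0$), so one must isolate the singular part. Writing $\int_0^\infty r(r^2+2r\cos t+1)^{-1-s/2}\,dr = \frac{1}{s}+ (\text{finite part})+ o(1)$, the $\frac{1}{s}$ cancels in the ratio $M(\theta,s)/M(\pi/2,s)$, leaving $M(\theta,s)/M(\pi/2,s)= \frac{\theta/\pi \cdot (1/s) + A(\theta)+o(1)}{(1/2)(1/s)+A(\pi/2)+o(1)}= \frac{2\theta}{\pi} + s\bigl[\ldots\bigr]+o(s)$, where the finite part $A(\theta)$ will turn out to involve $\int_0^\theta\log(\sin t)\,dt$ and the function $\Xi(\theta)=\int_0^\theta t/\tan t\,dt$ (the latter coming from an integration by parts of $\int \log\sin t\,dt$ against the $t$-weight, or directly from differentiating the kernel). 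Combined with $(\sin\theta)^s = 1 + s\log\sin\theta + o(s)$ and the leading relation $1+\sigma = \frac{2\theta_0}{\pi}$, i.e.\ $\theta_0=\frac{\pi}{2}(1+\sigma)$, solving $F=0$ to first order in $s$ produces \eqref{LASECONDA}, with the $\cos(\pi\sigma/2)$ appearing as $\sin\theta_0 = \sin(\frac{\pi}{2}(1+\sigma))=\cos(\frac{\pi\sigma}{2})$.

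The main obstacle I expect is the bookkeeping in the $s\to 0^+$ case: correctly identifying and subtracting the $1/s$ singularity of the inner integral \emph{uniformly} enough in $t$ to justify integrating the expansion term-by-term over $t\in(0,\theta)$ (the $t\to 0$ endpoint is delicate since $\sin t\to 0$), and then tracking how the finite parts combine through the quotient and through the inversion of the implicit equation. The $s\to 1^-$ case is more routine but still requires care in the closed-form evaluation of the inner integral and in verifying that the remainder is genuinely $o(1-s)$ rather than merely $O(1-s)$; uniformity of the expansions in $\sigma$ on compact subsets of $(-1,1)$ (needed so that the $o$-terms make sense pointwise in $\sigma$) should follow from the smooth dependence of all quantities on $\theta_0$ and from $\theta_0$ staying in a compact subset of $(0,\pi)$.
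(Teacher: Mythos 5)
Your overall strategy is the paper's: read the fractional Young's law as an implicit identity $f(s,\theta(s,\sigma))=1+\sigma$ with $f(s,\a)=(\sin\a)^s I(1,\a,s)/I(1,\pi/2,s)$, compute $\partial_s\theta=-\partial_s f/\partial_\a f$, expand separately near $s=1$ and near $s=0$ after isolating the $1/s$ singularity of $I(1,\a,s)$, and read the monotonicity off $\partial_\a f>0$. But two places in your outline have real holes.

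The first is the claim that the inner $r$-integral $\int_0^\infty r(r^2+2r\cos t+1)^{-(2+s)/2}\,dr$ can be ``evaluated in closed form'' for general $s$. It cannot: after completing the square and substituting $u=r+\cos t$, the residual piece is $\cos t\,(\sin t)^{-1-s}\int_{\pi/2-t}^{\pi/2}(\cos\phi)^s\,d\phi$, an incomplete integral without an elementary closed form. You do not actually need this closed form — since the implicit-function scheme only requires $\partial_s f(1,\a)$, it suffices to compute $\partial_s I(1,\a,s)|_{s=1}=-\int_0^\a\int_0^\infty \frac{\log(r^2+2r\cos t+1)}{(r^2+2r\cos t+1)^{3/2}}\,r\,dr\,dt$, which the paper evaluates by exhibiting an explicit antiderivative in $r$ (its function $g(a,r)$) and then an antiderivative in $\a$; some such device has to replace your closed-form step, or the $s\to1^-$ branch stalls.

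The second and more serious gap is at $s\to0^+$: you invoke ``the leading relation $1+\sigma=2\theta_0/\pi$'' without first ruling out that $\theta(s)$ degenerates to $0$ (or to $\pi$) along a subsequence as $s\to0^+$. If $\theta(s_k)\to0$, the prefactor $(\sin\theta(s_k))^{s_k}$ need not tend to $1$, and the whole first-order Taylor inversion breaks down. The paper disposes of this with a short contradiction argument (its step~\eqref{78qAl}): dropping the prefactor $(\sin\theta)^{s}\le1$ and using $I(1,\a,s)=2\a/s+O(1)$ uniformly, one gets $1+\sigma\le\lim 2\theta(s_k)/\pi=0$, contradiction. You need such a lemma before the $s\to0^+$ expansion can even start; once it is in place, your worries about uniformity of the $1/s$-subtraction in $t$ are resolved exactly as you suspect, by checking that the correction terms are bounded uniformly in $t\in[0,\pi/2]$ and $s\in(0,1)$.
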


\begin{remark}
  {\rm Though not crucial for our computations, we observe that
$$ \Xi\left(\frac{\pi}{2}\right)=\frac\pi2 \,\log2.$$}
\end{remark}

\begin{remark}
  {\rm We recall from \cite{MaggiValdinoci} that $\theta(s,0)=\pi/2$ for every $s\in(0,1)$. Thus,
  in the case
  $\s=0$ corresponding to the relative fractional isoperimetric problem,
  the fractional contact angles are all equal to the classical ninety degrees contact angle. We also
  notice that, despite this fact, when $\s=0$ and $n=2$ {\it half-disks are never volume-constraints critical points of
  the fractional capillarity energy on a half-space}.
  A geometric proof of this fact will be given in Appendix~\ref{09:SEC}.}
\end{remark}

\begin{remark}
  {\rm It is easily seen that the function
  \[
  2\sigma\log 2+(1-\sigma)\log(1-\sigma)-(1+\sigma)\log(1+\sigma)
  \]
  is strictly concave on $(-1,0)$, strictly convex on $(0,1)$, and that it takes the value $0$ at $\s=0,1,-1$.
  As a consequence, equation \eqref{LAPRIMA} implies that
  \[
  \theta(s,\s)<\arccos(-\s)\qquad\forall\s\in(-1,0)\,,\qquad
  \theta(s,\s)>\arccos(-\s)\qquad\forall\s\in(0,1)\,,
  \]
  provided $s$ is close enough to $1$. Correspondingly, for $s$ close to $1$,
  in the hydrophilic regime $\s\in(-1,0)$ fractional
  droplets are more hydrophilic than their classical counterparts, while in the hydrophobic case they are more
  hydrophobic. As
  \[
  \frac{\pi}{2}(1+\sigma)<\arccos(-\s)\qquad\forall\s\in(-1,0)\qquad
    \frac{\pi}{2}(1+\sigma)>\arccos(-\s)\qquad\forall\s\in(0,1)\qquad
  \]
  the same assertions hold for $s$ close to $0$. Figure~\ref{fig PLOT},
  \begin{figure}
  \epsfig{file=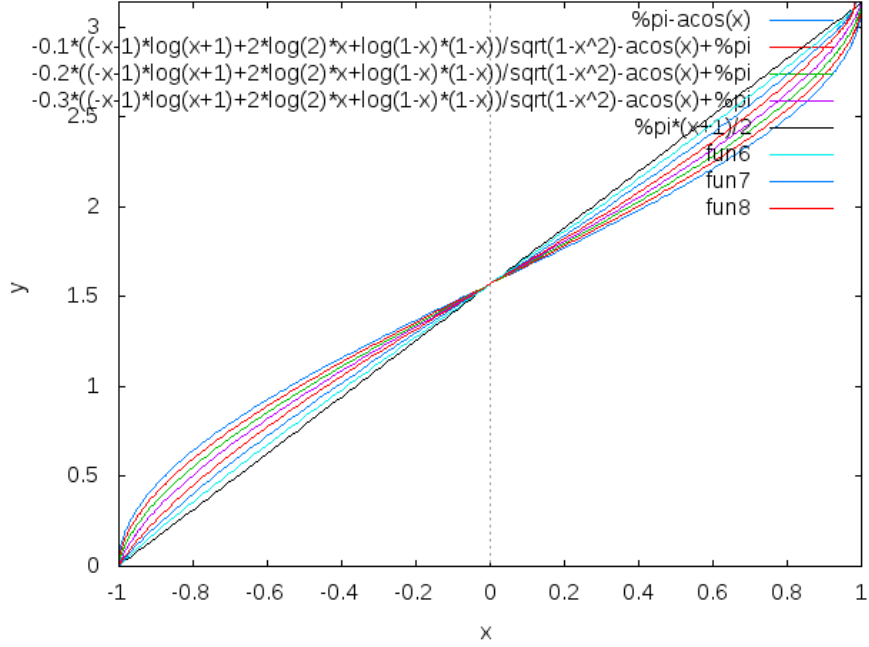,height=9cm}
 \caption{{\small An approximate plot of the function~$(-1,1)\ni\sigma\mapsto\theta(s,\sigma)$
for some values of~$s$. Notice that ``the arccosine linearizes''
as~$s$ goes from~$1$ to~$0$. The approximate picture is obtained by
plotting with Maxima formulas \eqref{LAPRIMA} (disregarding $o(1-s)$)
and \eqref{LASECONDA} (disregarding $o(s)$) for some values of $s$.
Also, to approximatively plot the function $\Xi$ in \eqref{XIDE},
a high order Taylor expansion of the function $t/\tan t$
at $t=\frac\pi2$ has been used.}}\label{fig PLOT}
\end{figure}
  which is also included with the aim
  of facilitating the interpretation of the mathematical results in~\eqref{LAPRIMA} and~\eqref{LASECONDA},
  suggests that this should be the case for every $s\in(0,1)$.}
\end{remark}

\medskip

\noindent {\bf Acknowledgments:} 
It is a pleasure to thank Matteo Cozzi for his very
interesting comments on a preliminary version of this paper.

FM was supported by NSF-DMS Grant 1265910 and NSF-DMS FRG Grant 1361122.
EV was supported by ERC grant 277749 ``E.P.S.I.L.O.N. Elliptic PDE's and Symmetry of Interfaces and Layers for Odd Nonlinearities''
 and PRIN grant 201274FYK7 ``Critical Point Theory and Perturbative Methods for Nonlinear Differential Equations''.

\section{Proof of Theorem \ref{ASY}} We start with the following lemma.

\begin{lemma}
If $\a\in\left(0,\frac\pi2\right)$, then
\begin{equation}\label{AP}
\int_0^\a\left[
\int_0^{+\infty}
\frac{\log\left( {r}^{2}+2\,r\,\cos t+1\right) }{
{\left( {r}^{2}+2\,r\,\cos t+1\right) }^{\frac{3}{2}}}\,r\,dr
\right]\,dt =
\frac{4(1-\cos\alpha)+
2\big(\log(\cos\alpha+1)-\log2\big)
}{\sin \alpha}\,
.\end{equation}
\end{lemma}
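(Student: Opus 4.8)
\textit{Plan of proof.} The plan is to evaluate the double integral by Fubini's theorem, first carrying out the inner integration in $r$ for fixed $t\in(0,\a)$, and then integrating the resulting function of $t$ over $(0,\a)$. Throughout we use that $t\in(0,\a)\subset(0,\tfrac\pi2)$, so that $\cos t$ and $\sin t$ are both positive.

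For the inner integral I would fix $t$, write $c=\cos t$ and $b=\sin t$ (so $b>0$ and $b^2+c^2=1$), complete the square $r^2+2rc+1=(r+c)^2+b^2$, and substitute $u=r+c$. This splits the inner integral as
\[
\int_c^{+\infty}\frac{u\,\log(u^2+b^2)}{(u^2+b^2)^{3/2}}\,du-c\int_c^{+\infty}\frac{\log(u^2+b^2)}{(u^2+b^2)^{3/2}}\,du\,.
\]
The first piece becomes $\tfrac12\int_1^{+\infty}v^{-3/2}\log v\,dv$ after setting $v=u^2+b^2$ and using $c^2+b^2=1$, and one integration by parts shows it equals $2$. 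For the second piece I would substitute $u=b\tan\phi$: the lower endpoint $u=c$ corresponds to $\phi=\tfrac\pi2-t$ (because $c/b=\cot t$), and the integral reduces to $\tfrac1{b^2}\int_{\pi/2-t}^{\pi/2}\cos\phi\,\bigl(\log b^2-\log\cos^2\phi\bigr)\,d\phi$, which I would evaluate using the elementary antiderivative $\int\cos\phi\log\cos\phi\,d\phi=\sin\phi\log\cos\phi+\log(\sec\phi+\tan\phi)-\sin\phi$. After simplification (using $b^2=(1-c)(1+c)$), the inner integral should collapse to the function
\[
G(t):=\frac{2}{1+\cos t}-\frac{2\cos t\,\bigl(\log(1+\cos t)-\log2\bigr)}{\sin^2 t}\,.
\]

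For the outer integral I would then compute $\int_0^\a G(t)\,dt$ term by term. The first term gives $\int_0^\a\tfrac{2\,dt}{1+\cos t}=\int_0^\a\sec^2\tfrac t2\,dt=2\tan\tfrac\a2$. For the second term, writing $w(t):=\log(1+\cos t)-\log2$, I would integrate by parts using $\tfrac{d}{dt}\bigl(-\tfrac2{\sin t}\bigr)=\tfrac{2\cos t}{\sin^2 t}$ together with $w'(t)=-\tfrac{\sin t}{1+\cos t}$; the boundary contribution at $t=0$ vanishes since $w(t)=O(t^2)$, and the leftover integral reduces once more to $2\tan\tfrac\a2$. Collecting the pieces gives $\int_0^\a G(t)\,dt=4\tan\tfrac\a2+\tfrac{2w(\a)}{\sin\a}$, and the stated identity follows from $\tan\tfrac\a2=\tfrac{1-\cos\a}{\sin\a}$.

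The step I expect to require the most care is the $\log$-weighted integral $\int_c^{+\infty}\log(u^2+b^2)(u^2+b^2)^{-3/2}\,du$: in the $\phi$-variable its integrand has a logarithmic singularity at the upper limit $\phi=\tfrac\pi2$ that cancels only after combining the $\sin\phi\log\cos\phi$ and $\log(\sec\phi+\tan\phi)$ contributions, and one must also verify that all the $\log b$ terms cancel — this cancellation is precisely what leaves the answer depending on $t$ only through $\cos t$ in the simple form of $G(t)$ above. Everything else is a routine, if slightly lengthy, sequence of elementary substitutions and integrations by parts.
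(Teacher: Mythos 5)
Your proof is correct and arrives at exactly the same intermediate formula: your $G(t)$ equals the paper's $h(\cos t)$, where $h(a)=\frac{2a(1-\log 2+\log(a+1))-2}{a^2-1}$ is the inner $r$-integral, and your final $4\tan\frac{\alpha}{2}+\frac{2w(\alpha)}{\sin\alpha}$ is precisely the paper's $H(\alpha)$. The overall decomposition is identical — Fubini, inner $r$-integral at fixed $t$, then outer $t$-integral — so structurally this is the same proof. The only difference is in execution: the paper exhibits the relevant antiderivatives (a closed form $g(a,r)$ in $r$, and the function $H$ itself satisfying $H'(\alpha)=h(\cos\alpha)$, $H(0)=0$), evidently reverse-engineered from the desired identity, and verifies them by differentiation; you instead construct them from scratch via completing the square, the substitution $u=b\tan\phi$, the antiderivative of $\cos\phi\log\cos\phi$, and an integration by parts for the outer integral. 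Your route is a bit longer but self-contained and does not presuppose the answer, whereas the paper's verify-by-differentiation route is shorter but opaque as to where $g$ and $H$ come from. Your observation about the delicate cancellation of the $\log b$ terms and of the logarithmic singularity at $\phi=\pi/2$ is accurate and is indeed the one place where care is needed.
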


\begin{proof} To show this,
given $a$, $r>0$, we let
\begin{eqnarray*}
&& \rho(a,r):=\sqrt{ {r}^{2}+2\,a\,r+1 }\\
{\mbox{and }}&& g(a,r):=
\frac{2\left( a\,r+1\right) \,\log\rho(a,r)-
2\,a\,\rho(a,r)
\,\log
\left( a+r+\rho(a,r)
\right) +2\,a\,r+2}{
({a}^{2}-1)\,\rho(a,r)
}
.\end{eqnarray*}
By a direct computation, we see that
$$ \partial_r g(a,r)=
\frac{r\,\log\left( {r}^{2}+2\,a\,r+1\right) }{
{\left( {r}^{2}+2\,a\,r+1\right) }^{\frac{3}{2}}}.$$
Also,
$$ g(a,0)= \frac{-2a\log(a+1)+2}{a^2-1}.$$
In addition, as $r\to+\infty$,
$$ \rho(a,r)=r\left(1+\frac{a}{r}+O\left(\frac1{r^2}\right)\right),$$
and so
\begin{eqnarray*}&& g(a,r)\\ &=&
\frac{1}{
({a}^{2}-1)\,
r\,\left(1+\frac{a}{r}+O\left(\frac1{r^2}\right)\right)}
\,\left[2\left( a\,r+1\right) \,\log\left(
r\left(1+\frac{a}{r}+O\left(\frac1{r^2}\right)\right)
\right)\right.
\\ && \qquad\left.-2\,a\,
r\,\left(1+\frac{a}{r}+O\left(\frac1{r^2}\right)\right)\,\log
\left( a+r+r\left(1+\frac{a}{r}+O\left(\frac1{r^2}\right)\right)
\right) +2\,a\,r+2\right]
\\
&=&
\frac{1}{
({a}^{2}-1)\,
r\,\left(1+\frac{a}{r}+O\left(\frac1{r^2}\right)\right)}
\,\left[2\left( a\,r+1\right) \,\left[ \log r +\log\left(
1+\frac{a}{r}+O\left(\frac1{r^2}\right)\right)
\right]\right.
\\ && \qquad\left.-2\,a\,
r\,\left(1+\frac{a}{r}+O\left(\frac1{r^2}\right)\right)\,
\left[
\log r+\log
\left(2+\frac{2a}{r}+O\left(\frac1{r^2}\right)\right)
\right] +2\,a\,r+2\right]
\\
&=&
\frac{1}{
({a}^{2}-1)\,
\left(1+o(1)\right)}
\,\left[o(1)
-2\,a\,\log(2+o(1))+2\,a\right]
\\ &&\longrightarrow \frac{2a(1-\log 2)}{a^2-1}\qquad{\mbox{ as }}r\to+\infty.
\end{eqnarray*}
As a consequence,
\begin{equation}\label{a cost}
\begin{split}
&
\int_0^{+\infty}
\frac{\log\left( {r}^{2}+2\,a\,r+1\right) }{
{\left( {r}^{2}+2\,a\,r+1\right) }^{\frac{3}{2}}}\,r\,dr=
\int_0^{+\infty} \partial_r g(a,r)\,dr=g(a,+\infty)-g(a,0)\\
&\qquad\qquad=
\frac{2a\big(1-\log 2+\log(a+1)\big)-2}{a^2-1}=:h(a).
\end{split}\end{equation}
Now, for any $\a\in\left(0,\frac\pi2\right)$, we set
$$ H(\a):=\frac{4(1-\cos\alpha)+
2\big(\log(\cos\alpha+1)-\log2\big)
}{\sin \alpha}.$$
Notice that
$$ \lim_{\a\to 0^+} H(\a)=0.$$
Also, we compute that
$$ \partial_\a H(\a)= h(\cos\a).$$
Therefore
$$ \int_0^\a h(\cos t)\,dt = H(\a)-H(0)=\frac{4(1-\cos\alpha)+
2\big(\log(\cos\alpha+1)-\log2\big)
}{\sin \alpha}.$$
Using this and \eqref{a cost} with $a:=\cos t$, we obtain \eqref{AP}.
\end{proof}

\begin{proof}[Proof of Theorem~\ref{ASY}] Since~\eqref{LAPRIMA} and~\eqref{LASECONDA}
(as well as the monotonicity property in~$\sigma$
claimed in Theorem~\ref{ASY}) are invariant for the symmetry
$$ \theta(s,-\sigma)=\pi-\theta(s,\sigma),$$
we can assume that~$\sigma\in(-1,0)$,
and thus~(see \cite{MaggiValdinoci}) $\theta(s,\sigma)\in
\left(0,\frac\pi2\right)$. Also, whenever clear
from the context, we use the short notation~$\theta(s):=
\theta(s,\sigma)$.
For any
$\alpha\in\left(0,\frac\pi2\right]$ we define the cone of opening $2\alpha$
$$ \Gamma_\alpha:=\{ x=(x_1,x_2)\in\R^2 {\mbox{ s.t. }} x_2<-|x|\,\cos\alpha\}.$$
Then, for any $s\in(0,1]$ and
$\eta\in[0,+\infty)$, we set
\begin{equation}\label{IMV:0}
I(\eta,\alpha,s):=
\int_{\Gamma_\alpha} \frac{dz}{|\eta e_2-z|^{2+s}}.\end{equation}
We know that, see \cite[Proof of Theorem 1.4, step three]{MaggiValdinoci},
\begin{equation}\label{IMV}
I(1,\alpha,s)=2\,
\int_0^\a\left[
\int_0^{+\infty}
\frac{r\,dr}{
{\left( {r}^{2}+2\,r\,\cos t+1\right) }^{\frac{2+s}{2}}}
\right]\,dt\end{equation}
and that
the optimal angle $\theta=\theta(s)$ in the Young's law
satisfies
\begin{equation*}
1+\sigma = \frac{I(1,\theta,s)}{I\left(\sin\theta,\frac\pi2,s\right)}.
\end{equation*}
Also, by scaling~\eqref{IMV:0}, one sees that
\begin{equation*}
I\left(\sin\theta,\frac\pi2,s\right) =
\frac{ I\left(1,\frac\pi2,s\right) }{(\sin\theta)^s}\end{equation*}
and thus setting
$$ f(s,\a):=\frac{(\sin\a)^s\, I(1,\a,s) }{ I\left(1,\frac\pi2,s\right) }$$
we have
\begin{equation}\label{LASIG}
f(s,\theta(s))=1+\sigma.\end{equation}
Accordingly, recalling that~$\theta=\theta(s)$ and differentiating with respect to $s$,
we find that
\begin{equation*}
\partial_s f(s,\theta(s)) +\partial_\a f(s,\theta(s)) \,\partial_s\theta(s)=0,
\end{equation*}
and so (being, evidently, $\pa_\a f>0$ for $\a\in(0,\pi/2)$)
\begin{equation}\label{DINI}
\partial_s\theta(s)=-\frac{
\partial_s f(s,\theta(s)) }{\partial_\a f(s,\theta(s))}\,.
\end{equation}
Now, to compute $\partial_s\theta(1)$ and
prove~\eqref{LAPRIMA}, we evaluate~$
\partial_s f(1,\theta(1))$ and~$\partial_\a f(1,\theta(1)) $ and we substitute
these expressions into~\eqref{DINI} (evaluated at~$s=1$). For this, we recall (see~\cite[end of section~4]{MaggiValdinoci}) that
\begin{equation}\label{I111} I(1,\alpha,1)=\frac{2\sin\alpha}{1+\cos\alpha}\end{equation}
and therefore
\begin{equation}\label{I11} \partial_\a I(1,\a,1)=\frac{2}{1+\cos\a}.\end{equation}
In addition,
\begin{equation}\label{conS}
\partial_\a f(s,\a)=
\frac{ s(\sin\a)^{s-1}\cos\a\, I(1,\a,s)
+(\sin\a)^s\, \partial_\a I(1,\a,s) }{ I\left(1,\frac\pi2,s\right)}
\end{equation}
so that, by~\eqref{I11},
\begin{equation}\label{AP000}
\partial_\a f(1,\a)=
\frac{\frac{2\sin\a\,\cos\a}{1+\cos\a}+\frac{2\sin\a}{1+\cos\a}}{ 2 }
= \sin\a\,.
\end{equation}
On the other hand,
\begin{equation}\label{AP2:pre}
\partial_s f(s,\a)
=
\frac{(\sin\a)^s\,\log
(\sin\a)\, I(1,\a,s) }{ I\left(1,\frac\pi2,s\right) }+
\frac{
(\sin\a)^s\, \partial_s I(1,\a,s) }{ I\left(1,\frac\pi2,s\right) }
-
\frac{
(\sin\a)^s\, I(1,\a,s)\,
\partial_s I\left(1,\frac\pi2,s\right)
}{\left( I\left(1,\frac\pi2,s\right)\right)^2 }
,\end{equation}
and so, recalling~\eqref{I111},
\begin{equation}\label{AP2}
\partial_s f(1,\a)=
\frac{\log(\sin\a)\sin^2\a}{1+\cos\a}
+ \frac{\sin\a}2\,\partial_s I(1,\a,1)
-\frac{\sin^2\a}{2\,(1+\cos\a)}\,\partial_s I\left(1,\frac\pi2,1\right).
\end{equation}
Furthermore, by~\eqref{IMV},
\begin{equation}\label{nao}
\partial_s I(1,\alpha,s)=-
\int_0^\alpha\left[
\int_0^{+\infty}
\frac{\log \left( {r}^{2}+2\,r\,\cos t+1\right) r\,dr}{
{\left( {r}^{2}+2\,r\,\cos t+1\right) }^{\frac{2+s}{2}}}
\right]\,dt.\end{equation}
Hence, by~\eqref{AP},
$$ \partial_s I(1,\a,1)=-
\frac{4(1-\cos\alpha)+
2\big(\log(\cos\alpha+1)-\log2\big)
}{\sin \alpha}\,.$$
Now we insert this identity into~\eqref{AP2} and we conclude that
\begin{equation}\label{AP001}
\begin{split}
\partial_s f(1,\a) \,&=
\frac{\log(\sin\a)\,\sin^2\a}{\cos\a+1}-
2(1-\cos\a)-\big(\log(\cos\alpha+1)-\log2\big)+
\frac{(2-\log 2)\sin^2\a}{\cos\a+1}\\
&=(1-\cos\a)\big(\log(\sin\a)-\log2\big)-
\log(\cos\alpha+1)+\log2
\,.\end{split}
\end{equation}
Hence, we insert~\eqref{AP000}
and \eqref{AP001} into~\eqref{DINI}
and we find that
\[
\partial_s\theta(1)=\frac{
(\cos\theta(1)-1)\big(\log(\sin\theta(1))-\log2\big)+
\log(\cos\theta(1)+1)-\log2}{\sin\theta(1)}\,.
\]
Since we have~$\cos \theta(1)=-\sigma$ and so~$\sin\theta(1)=
\sqrt{1-\sigma^2}$, we finally conclude that
\begin{eqnarray*}
\partial_s\theta(1)&=&
\frac{
(\sigma+1)\big(\log 2-\log(\sqrt{1-\sigma^2})\big)+
\log(1-\sigma)-\log2}{\sqrt{1-\sigma^2}}\\
&=& \frac{2\sigma\log 2+2\log(1-\sigma)-(\sigma+1)\log(1-\sigma^2)
}{2\sqrt{1-\sigma^2}}
\,.
\end{eqnarray*}
Accordingly, as $s\to1^-$,
\begin{eqnarray*}
\theta(s)&=&\theta(1)+\partial_s\theta(1)\,(s-1)+o(1-s)\\
&=& \arccos(-\sigma)-
\frac{2\sigma\log 2+(1-\sigma)\log(1-\sigma)-
(1+\sigma)\log(1+\sigma)
}{2\sqrt{1-\sigma^2}}
\,(1-s)+o(1-s),
\end{eqnarray*}
which establishes~\eqref{LAPRIMA}.
\medskip

Now we prove~\eqref{LASECONDA}.
To this aim, we observe that
\begin{equation}\label{exp r1}
\left( {r}^{2}+2\,r\,\cos t+1\right)^{-\frac{2+s}{2}}
=r^{-2-s} +\frac{\kappa}{r^{3+s}},\end{equation}
where
$$\kappa=\kappa(r,t,s):=r^{3+s}
\left( {r}^{2}+2\,r\,\cos t+1\right)^{-\frac{2+s}{2}} -r=
r
\left( 1+\frac{2\,\cos t}{r}+\frac{1}{r^2}\right)^{-\frac{2+s}{2}} -r.$$
As a consequence,
$$ \int_1^{+\infty}
\frac{r\,dr}{
{\left( {r}^{2}+2\,r\,\cos t+1\right) }^{\frac{2+s}{2}}}
= \int_1^{+\infty} \left[r^{-1-s} +\frac{\kappa}{r^{2+s}}\right]\,dr
= \frac{1}{s}+\kappa_0,$$
where
\begin{equation}\label{kappa 0}
\kappa_0=\kappa_0(t,s):=
\int_1^{+\infty} \frac{\kappa}{r^{2+s}}\,dr
=
\int_1^{+\infty} \frac{\left( 1+\frac{2\,\cos t}{r}+
\frac{1}{r^2}\right)^{-\frac{2+s}{2}} -1}{
r^{1+s}}\,dr
.\end{equation}
By a first order expansion, we notice
that~$\kappa_0$
is a function
which is bounded uniformly
in~$t\in\left[0,\frac\pi2\right]$ and~$s\in(0,1)$.

Therefore
\begin{equation}\label{ka1} \int_0^{+\infty}
\frac{r\,dr}{
{\left( {r}^{2}+2\,r\,\cos t+1\right) }^{\frac{2+s}{2}}}
= \frac{1}{s}+\kappa_1,\end{equation}
where
\begin{equation}\label{kappa 1}
\kappa_1=\kappa_1(t,s):=
\int_0^1
\frac{r\,dr}{
{\left( {r}^{2}+2\,r\,\cos t+1\right) }^{\frac{2+s}{2}}}
+ \kappa_0(t,s).\end{equation}
By construction, we have
that~$\kappa_1$
is bounded uniformly
in~$t\in\left[0,\frac\pi2\right]$ and~$s\in(0,1)$.

Hence, from~\eqref{IMV},
\begin{equation} \label{RAP}
I(1,\alpha,s)=2\,
\int_0^\a\left[ \frac{1}{s}+\kappa_1
\right]\,dt = \frac{2\a}{s}+\kappa_2,\end{equation}
where
\begin{equation}\label{kappa 2}
\kappa_2=\kappa_2(\a,s):=
2\,\int_0^\a\kappa_1\,dt.\end{equation}
We remark that~$\kappa_2$
is bounded uniformly
in~$\a\in\left[0,\frac\pi2\right]$ and~$s\in(0,1)$.

Now, we observe that
\begin{equation}\label{78qAl}
\liminf_{s\to0^+}\theta(s)>0.
\end{equation}
Indeed, if, by contradiction, it holds that
$$ \lim_{k\to+\infty} \theta(s_k)=0,$$
for some infinitesimal sequence~$s_k$, then
we deduce from~\eqref{LASIG} that
$$ 1+\sigma= f(s_k,\theta(s_k))=
\lim_{k\to+\infty}
\frac{ \big(\sin(\theta(s_k)\big)^{s_k}\,I(1,\theta(s_k),s_k) }{
I\left(1,\frac\pi2,s_k\right) }
\le
\lim_{k\to+\infty}
\frac{ I(1,\theta(s_k),s_k) }{
I\left(1,\frac\pi2,s_k\right) }.$$
Therefore, by~\eqref{RAP},
$$ 0<1+\sigma\le \lim_{k\to+\infty}
\frac{ \frac{2\theta(s_k)}{s_k}+\kappa_2(\theta(s_k),s_k) }{
\frac{\pi}{s_k}+\kappa_2\left(\frac\pi2,s_k\right) } =
\lim_{k\to+\infty}
\frac{ 2\theta(s_k)+s_k\kappa_2(\theta(s_k),s_k) }{
\pi+s_k\kappa_2\left(\frac\pi2,s_k\right) } = 0,
$$
which is a contradiction, thus proving~\eqref{78qAl}.

{F}rom~\eqref{78qAl}, we deduce that
\begin{equation}\label{SINBO}
\lim_{s\to0^+} \big(\sin\theta(s)\big)^s=1.\end{equation}
Therefore, in light of~\eqref{LASIG} and~\eqref{RAP},
\begin{eqnarray*}
&& 1+\sigma= \lim_{s\to0^+} f(s,\theta(s))=
\lim_{s\to0^+}
\frac{ \big(\sin(\theta(s)\big)^{s}\,I(1,\theta(s),s) }{
I\left(1,\frac\pi2,s\right) }\\
&&\qquad=
\lim_{s\to0^+}
\frac{ I(1,\theta(s),s) }{
I\left(1,\frac\pi2,s\right) }
= \lim_{s\to0^+}
\frac{ \frac{2\theta(s)}{s}+\kappa_2(\theta(s),s) }{
\frac{\pi}{s}+\kappa_2\left(\frac\pi2,s\right) }
\\ &&\qquad=
\lim_{s\to0^+}
\frac{ 2\theta(s)+s\kappa_2(\theta(s),s) }{
\pi+s\kappa_2\left(\frac\pi2,s\right) }
=\frac{2\lim_{s\to0^+}\theta(s)}{\pi},
\end{eqnarray*}
which proves that
\begin{equation}\label{LINEAR}
\theta(0)=\frac{\pi}{2}(1+\sigma)
.\end{equation}
Now, by \eqref{kappa 0},
\begin{equation} \label{0iosak11}
\kappa_0(t,0)=
\int_1^{+\infty} \frac{\left( 1+\frac{2\,\cos t}{r}+
\frac{1}{r^2}\right)^{-1} -1}{
r}\,dr=-\int_1^{+\infty}
\frac{2\cos t+\frac1r}{r^2+2\,r\cos t+1}\,dr.\end{equation}
We also set
\begin{eqnarray*}
&&\varphi(r,t):=\frac{\arctan \left(\frac{\cos t+r}{\sin t}\right)}{\tan t}-\frac12
\log\frac{r^2 +2\,r \cos t +1}{ r^2}\\
{\mbox{and }}&& \psi(r,t):=\log r-\varphi(r,t)=
\frac12 \log(r^2+2\,r \cos t+1)-\frac{\arctan\left(\frac{\cos t+r
}{\sin t }\right)}{\tan t}
\end{eqnarray*}
and we compute that
$$ \frac{\partial}{\partial r}
\varphi(r,t)= \frac{2\cos t+\frac1r}{r^2+2\,r\cos t+1}.$$
{F}rom this and~\eqref{0iosak11}, we conclude that
\begin{equation}\label{ps61} \kappa_0(t,0)
=\varphi(1,t)-\varphi(+\infty,t)
\,.\end{equation}
We also remark that
$$ \frac{\partial}{\partial r}\psi(r,t)=
\frac{\partial}{\partial r} \big[
\log r-\varphi(r,t) \big]
=
\frac{r}{
r^{2}+2\,r\,\cos t+1 }$$
and thus
$$ \int_0^1
\frac{r\,dr}{
{ {r}^{2}+2\,r\,\cos t+1 }} =
\psi(1,t)-\psi(0,t)=-\varphi(1,t)+
\frac{\arctan
\left(\frac{\cos t}{\sin t }\right)
}{\tan t}.$$
Hence, from \eqref{kappa 1} and~\eqref{ps61},
\begin{equation}\label{suvgf19}
\begin{split}
& \kappa_1(t,0)=
\int_0^1
\frac{r\,dr}{
{ {r}^{2}+2\,r\,\cos t+1 }}
+ \kappa_0(t,0)\\&\qquad\qquad=
\frac{\arctan
\left(\frac{\cos t}{\sin t }\right)
}{\tan t}
-\varphi(+\infty,t)=
\frac{\arctan
\left(\frac{1}{\tan t }\right)
}{\tan t}
-
\frac{\pi
}{2\tan t}
.\end{split}\end{equation}
Now we point out that, for any~$x\in\R$,
$$ \frac12-\frac1\pi \arctan
\left(\frac{1}{\tan (\pi x) }\right)=\{x\},$$
where~$\{\cdot\}$ denotes here the fractional part.
As a consequence, for any~$t\in\left[0,\frac\pi2\right]$
(or, more generally, for any~$t\in[0,\pi]$) we have
$$ \arctan
\left(\frac{1}{\tan t }\right)
-
\frac{\pi
}{2} =-\pi \left\{ \frac{t}{\pi}\right\}=-t.$$
This and~\eqref{suvgf19} say that
$$ \kappa_1(t,0)=-\frac{t}{\tan t}.$$
Therefore, in the light of~\eqref{kappa 2}
and recalling the definition in~\eqref{XIDE}, we conclude that
\begin{equation}\label{CALCO k2}
\kappa_2(\a,0)=
-2\,\int_0^\a
\frac{t}{\tan t}
\,dt= -2\Xi(\a).\end{equation}
Now we remark that
\begin{eqnarray*}&& \log\left( {r}^{2}+2\,r\,\cos t+1\right)=
\log\left( {r}^{2}\left(1+\frac{2\,\cos t}r+\frac1{r^2}\right)\right)
\\ &&\qquad\qquad=\log r^2+ \log\left(1+\frac{2\,\cos t}r+
\frac1{r^2}\right)=2\log r+\frac{\chi_0}{r},\end{eqnarray*}
where~$\chi_0=\chi_0(r,t)$ is bounded uniformly in~$r\in[1,+\infty)$
and~$t\in\left[0,\frac\pi2\right]$.
Then, by~\eqref{exp r1}, we obtain
\begin{equation}\label{sp0193}
\frac{\log\left( {r}^{2}+2\,r\,\cos t+1\right) }{
{\left( {r}^{2}+2\,r\,\cos t+1\right) }^{\frac{2+s}{2}}}\,r=
\frac{ \left(2\,r\log r+\chi_0\right)\left(1+\frac{\kappa}{r}\right)
}{ r^{2+s}  }=
\frac{ 2\,r\log r+ 2\kappa\log r+\frac{\chi_0\kappa}{r}
+\chi_0
}{ r^{2+s}  }.
\end{equation}
Furthermore,
$$ \frac{\partial}{\partial r} \frac{1+s\,\log r}{s^2\,r^s}=-\frac{\log r}{r^{1+s}}.$$
This and~\eqref{sp0193} imply that
\begin{equation*}
\int_1^{+\infty}\frac{\log\left( {r}^{2}+2\,r\,\cos t+1\right) }{
{\left( {r}^{2}+2\,r\,\cos t+1\right) }^{\frac{2+s}{2}}}\,r\,dr=
2\int_1^{+\infty} \frac{\log r}{r^{1+s}}\,dr +\chi_1
= \frac{2}{s^2}+\chi_1,
\end{equation*}
with~$\chi_1=\chi_1(t,s)$, which is bounded uniformly in~$t\in\left[0,\frac\pi2\right]$
and~$s\in(0,1)$.

Consequently,
\begin{equation*}
\int_0^{+\infty}\frac{\log\left( {r}^{2}+2\,r\,\cos t+1\right) }{
{\left( {r}^{2}+2\,r\,\cos t+1\right) }^{\frac{2+s}{2}}}\,r\,dr
= \frac{2}{s^2}+\chi_2,
\end{equation*}
with~$\chi_2=\chi_2(t,s)$, which is bounded uniformly in~$t\in\left[0,\frac\pi2\right]$
and~$s\in(0,1)$. {F}rom this identity and~\eqref{nao}, we obtain
$$ \partial_s I(1,\alpha,s)=-
\int_0^\alpha\left[ \frac{2}{s^2}+\chi_2
\right]\,dt = -\frac{2\alpha}{s^2}+\chi_3,$$
with~$\chi_3=\chi_3(\a,s)$ bounded uniformly in~$\a\in\left[0,\frac\pi2\right]$
and~$s\in(0,1)$.

Using this and \eqref{RAP}, we have
\begin{equation}\label{AP2:prebis}
\begin{split}
& \frac{
(\sin\a)^s\, \partial_s I(1,\a,s) }{ I\left(1,\frac\pi2,s\right) }
-
\frac{
(\sin\a)^s\, I(1,\a,s)\,
\partial_s I\left(1,\frac\pi2,s\right)
}{\left( I\left(1,\frac\pi2,s\right)\right)^2 } \\
=\;&
\frac{
(\sin\a)^s\, \left( -\frac{2\alpha}{s^2}+\chi_3\right) }{ \frac{\pi}{s}+\bar\kappa_2}
-
\frac{
(\sin\a)^s\, \left(\frac{2\a}{s}+\kappa_2\right)\,
\left( -\frac{\pi}{s^2}+\chi_3\right)
}{\left(\frac{\pi}{s}+\bar\kappa_2\right)^2 } \\
=\;& \frac{ (\sin\a)^s }{\left(\frac{\pi}{s}+\bar\kappa_2\right)^2 }
\left[ \left( -\frac{2\alpha}{s^2}+\chi_3\right) \left(\frac{\pi}{s}+\bar\kappa_2\right)-
\left(\frac{2\a}{s}+\kappa_2\right)\,
\left( -\frac{\pi}{s^2}+\chi_3\right)
\right]\\
=\;& \frac{ (\sin\a)^s \, \eta}{\left(\pi+s\bar\kappa_2\right)^2 },
\end{split}\end{equation}
where we used the short notations~$\kappa_2=\kappa_2(\alpha,s)$,
$\bar\kappa_2:=\kappa_2\left(\frac\pi2,s\right)$ and
$$ \eta=\eta(\alpha,s):= s\chi_3\,(\pi-2\alpha+s\bar\kappa_2-s\kappa_2)
+\pi\kappa_2-2\alpha\bar\kappa_2.$$
We stress that an important simplification occurred in the last step
of~\eqref{AP2:prebis}.

Notice also that
\begin{equation}
\eta(\alpha,0)=\pi\kappa_2(\alpha,0)-2\alpha\kappa_2\left(\frac\pi2,0
\right).
\end{equation}
Furthermore, recalling~\eqref{RAP},
\begin{eqnarray*}
&& \frac{(\sin\a)^s\,\log
(\sin\a)\, I(1,\a,s) }{ I\left(1,\frac\pi2,s\right) } = \frac{(\sin\a)^s\,\log
(\sin\a)\, \left( \frac{2\a}{s}+\kappa_2 \right) }{
\frac{\pi}{s}+\bar\kappa_2  }=\frac{(\sin\a)^s\,\log
(\sin\a)\, \left( 2\a+s\kappa_2 \right) }{
\pi+s\bar\kappa_2  }
.\end{eqnarray*}
Using this, \eqref{AP2:pre} and~\eqref{AP2:prebis}, we conclude that
\begin{equation*}
\partial_s f(s,\a)=\frac{(\sin\a)^s\,\log
(\sin\a)\, \left( 2\a+s\kappa_2 \right) }{
\pi+s\bar\kappa_2  }+\frac{ (\sin\a)^s \,\eta}{\left(\pi+s\bar\kappa_2\right)^2 }.
\end{equation*}
Thus, recalling
also~\eqref{SINBO} and~\eqref{CALCO k2},
\begin{equation}\label{jd991}
\partial_s f(0,\theta(0))=\frac{2\theta(0)\,\log
\big(\sin\theta(0)\big) }{
\pi }-\frac{ 2\pi\Xi\big(\theta(0)\big)
-4\theta(0)\,\Xi\left( \frac\pi2\right)
 }{\pi^2 }.
\end{equation}
Now we observe that,
in view of~\eqref{IMV} and~\eqref{ka1},
$$ \partial_\a I(1,\alpha,s)=2\,
\int_0^{+\infty}
\frac{r\,dr}{
{\left( {r}^{2}+2\,r\,\cos \a+1\right) }^{\frac{2+s}{2}}}
=\frac2{s}+\tilde\kappa_1,$$
where~$\tilde\kappa_1=\tilde\kappa_1(\a,s)$ is a function
which is bounded uniformly
in~$\a\in\left[0,\frac\pi2\right]$ and~$s\in(0,1)$.
Consequently, recalling~\eqref{conS} and~\eqref{RAP},
\begin{eqnarray*}
\partial_\a f(s,\a)&=&
\frac{ s(\sin\a)^{s-1}\cos\a\, \left( \frac{2\a}{s}+\kappa_2\right)
+(\sin\a)^s\, \left( \frac2{s}+\tilde\kappa_1\right) }{ \frac{\pi}{s}+\bar\kappa_2}\\
&=&\frac{ s(\sin\a)^{s-1}\cos\a\, \left( {2\a}+s\kappa_2\right)
+(\sin\a)^s\, \left( 2+s\tilde\kappa_1\right) }{ \pi+s\bar\kappa_2}.
\end{eqnarray*}
Therefore, exploiting~\eqref{SINBO} once again,
$$
\partial_\a f(0,\theta(0))=\frac2{ \pi}.
$$
Making use of this, \eqref{DINI}
and~\eqref{jd991}, we find
\begin{equation*}
\partial_s\theta(0)=-\,\theta(0)\,\log\big(\sin\theta(0)\big)
+\frac{ \pi\Xi\big(\theta(0)\big)-2\theta(0)\,\Xi\left(\frac\pi2\right) }{\pi}.
\end{equation*}
Accordingly, by~\eqref{LINEAR},
\begin{equation*}
\partial_s\theta(0)=-\frac{\pi\,(1+\sigma)}{2}\,\log\left(
\cos\frac{\pi\sigma}{2}\right)
+\,\Xi\left(\frac{\pi(1+\sigma)}{2}\right)-(1+\sigma)\,
\Xi\left(\frac\pi2\right) .
\end{equation*}
{F}rom this and~\eqref{LINEAR},
the desired result in~\eqref{LASECONDA} plainly follows.
\medskip

Now we check the monotonicity of the function~$\theta(s,\sigma)$
with respect to~$\sigma$. For this, 	
we differentiate~\eqref{LASIG} (recall that now $\theta=\theta(s,\sigma)$)
and see that
\begin{equation} \label{yah91}
f_\alpha(s,\theta(s,\sigma))
\,\partial_\sigma\theta(s,\sigma)=1.\end{equation}
Also, from~\eqref{IMV}, we have that~$\partial_\a I(1,\a,s)>0$.
Accordingly, by~\eqref{conS}, we obtain that~$
\partial_\a f(s,\a)>0$.
This and~\eqref{yah91} give that~$\partial_\sigma\theta(s,\sigma)>0$,
as desired.
\end{proof}

\begin{appendix}

\section{Remarks on the shape of the minimizers}\label{09:SEC}

It is interesting to remark that minimizers of capillarity problems
with $\sigma=0$, $g=0$, $n=2$ and $\Omega=H=\{x_2>0\}$ are not half-balls,
differently to what happens in the classical case.

To check this statement, suppose, by contradiction, that $B=H B_\rho(0)$ is a critical
point (here and in the sequel, for typographical convenience,
we use the short
notation for intersection of sets~$AB:=A\cap B$). Let $x\in H\,\partial B$ and denote by $R$ the reflected half-ball with
respect to the tangent line to $\partial B$ at $x$. We observe that $R\subset H$
and $B^c\supset H^c$.
Then, by formula (1.23)
in \cite{MaggiValdinoci}, the Euler-Lagrange equation at any point $x\in H\,\partial B$
reads as
\begin{equation}\label{ID zero}
\begin{split}
0\,&= \int_{\R^2} \frac{1_{B^c}(y)-1_{B}(y)}{|x-y|^{2+s}}\,dy
-\int_{H^c} \frac{dy}{|x-y|^{2+s}}\\
&=\int_{B^c H} \frac{dy}{|x-y|^{2+s}}
-\int_{B} \frac{dy}{|x-y|^{2+s}} \\
&= \int_{R} \frac{dy}{|x-y|^{2+s}}+
\int_{B^c R^c H} \frac{dy}{|x-y|^{2+s}}
-\int_{B} \frac{dy}{|x-y|^{2+s}}\\
&= \int_{B^c R^c H} \frac{dy}{|x-y|^{2+s}} =: {\mathcal{F}}(x),
\end{split}
\end{equation}
where a cancellation due to the symmetry between $B$
and $R$ was used in the last step of this identity.

Now we evaluate \eqref{ID zero} at $p=(-\rho,0) $
(see Figure \ref{FIG zero 1}) and at $q=(0,\rho)$
(see Figure \ref{FIG zero 2}), we use some
geometric argument exploiting isometric regions of $B^c R^c H$
and we obtain the desired contradiction.

\begin{figure}
  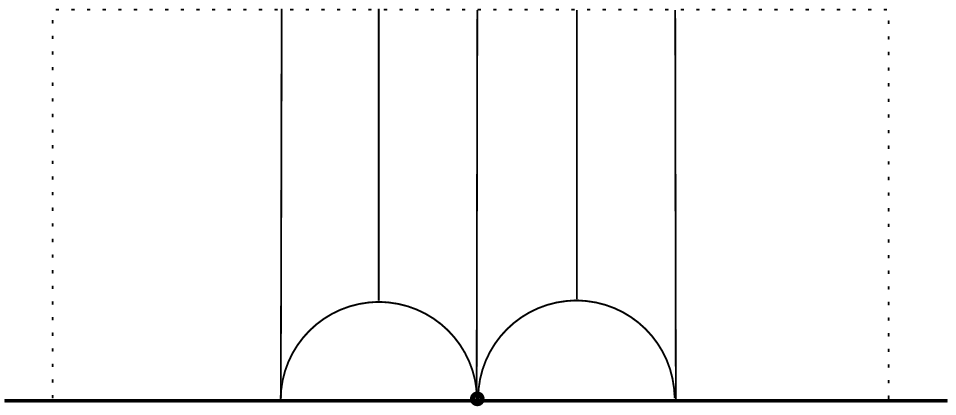\caption{{\small Formula \eqref{ID zero} evaluated at $p=(-\rho,0)$.}}
  \label{FIG zero 1}
\end{figure}

\begin{figure}
  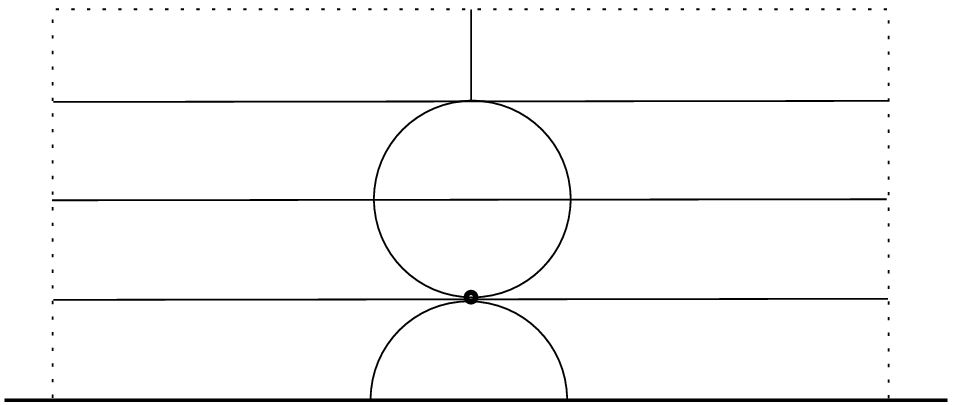\caption{{\small Formula \eqref{ID zero} evaluated at $q=(0,\rho)$.}}
  \label{FIG zero 2}
\end{figure}



To this aim,
we observe that, by \eqref{ID zero},
\begin{equation}\label{CON zero 1}
{\mathcal{F}}(p)=0={\mathcal{F}}(q).\end{equation}
On the other hand, we claim that
\begin{equation}\label{CON zero 2}
{\mathcal{F}}(p)<{\mathcal{F}}(q).\end{equation}
For this, we partition $B^c R^c H$ into the regions
$L$, $M$, $S$, $T$, $W$ and $X$ in
Figure \ref{FIG zero 1} and into the regions
$L$, $M$, $S$, $T$, $S'$, $T'$, $X$, $Y$ and $W$ in
Figure \ref{FIG zero 2}.

We observe that the contributions coming from $L$, $M$, $S$, $T$, $X$ and $W$
in Figure \ref{FIG zero 1} are, by isometry, exactly the same
as the ones coming from $L$, $M$, $S$, $T$, $X$ and $W$
in Figure \ref{FIG zero 2}.

Also, Figure \ref{FIG zero 2} possesses the additional contributions from $S'$, $T'$
and $Y$ that are not present in Figure \ref{FIG zero 1}. Therefore, the total
contributions in Figure \ref{FIG zero 2} are larger than the ones in
Figure \ref{FIG zero 1},
and
this proves \eqref{CON zero 2}.

Then, a contradiction arises by comparing \eqref{CON zero 1}
and \eqref{CON zero 2}, thus proving that half-balls are not critical
(and, in particular, not minimal).

\section{Proof of the asymptotics in \eqref{FG:A0}}\label{876}.

Up to scaling, we may assume that~$\Omega\subseteq B_1$.
Then, from formula~(3.9) in~\cite{MR3007726}, we have that
\begin{equation}\label{FG:AX}
\lim_{s\to0^+} s\,I_s(E,E^c\cap\Omega)\le
\lim_{s\to0^+} s\,\int_E\,dx\, \int_{B_1\setminus E}\frac{dy}{|x-y|^{n+s}}=0.
\end{equation}
Similarly, for any $R\ge1$,
\begin{equation}\label{01owriri}
\lim_{s\to0^+} s\,\int_E\,dx\, \int_{B_R\cap\Omega^c
}\frac{dy}{|x-y|^{n+s}}\le \lim_{s\to0^+} s\,\int_\Omega\,dx\,
\int_{B_R\cap\Omega^c
}\frac{dy}{|x-y|^{n+s}}=0.
\end{equation}
Also, from (2.2) in~\cite{MR3007726},
$$ \alpha(\Omega^c):=
\lim_{s\to0^+} s\,\int_{\Omega^c\cap B_1^c}\frac{dy}{|y|^{n+s}}=
\lim_{s\to0^+} s\,\int_{ B_1^c}\frac{dy}{|y|^{n+s}}=:\bar c(n).$$
This and formula (3.8) in~\cite{MR3007726} give that
\begin{eqnarray*}
0 &=& \lim_{R\to+\infty}\lim_{s\to0^+}
\left| \alpha(\Omega^c)\,|E|-
s\,\int_E\,dx\,\int_{\Omega^c\cap B_R^c}\frac{dy}{|x-y|^{n+s}}\right|\\
&=& \lim_{R\to+\infty}
\lim_{s\to0^+}
\left| \bar c(n)\,|E|-
s\,\int_E\,dx\,\int_{\Omega^c\cap B_R^c}\frac{dy}{|x-y|^{n+s}}\right|
.\end{eqnarray*}
{F}rom this and \eqref{01owriri} we obtain that
\begin{eqnarray*}
&& \lim_{s\to0^+}
\left| \bar c(n)\,|E|-
s\,\int_E\,dx\,\int_{\Omega^c}\frac{dy}{|x-y|^{n+s}}\right|
\\ &&\qquad\le
\lim_{R\to+\infty}
\lim_{s\to0^+}
\left| \bar c(n)\,|E|-
s\,\int_E\,dx\,\int_{\Omega^c\cap B_R^c}\frac{dy}{|x-y|^{n+s}}\right|
\\ &&\qquad\qquad+
s\,\int_E\,dx\,\int_{\Omega^c\cap B_R}\frac{dy}{|x-y|^{n+s}}=0,
\end{eqnarray*}
that is
$$ \lim_{s\to0^+}  s\,I_s(E,\Omega^c)=\bar c(n)\,|E|.$$
This and~\eqref{FG:AX} imply \eqref{FG:A0}.
\end{appendix}

\bibliography{references}
\bibliographystyle{is-alpha}
\def\cprime{$'$}

\end{document}